\documentclass[a4paper,12pt]{amsart}
\usepackage[cp1251]{inputenc}
\usepackage{amssymb}
\usepackage[russian,english]{babel}
\usepackage{rotating}
\usepackage{longtable,float}
\theoremstyle{plain}
\newtheorem{theorem}{Theorem}
\newtheorem{lemma}{Lemma}
\newtheorem{proposition}{Proposition}
\newtheorem{corollary}{Corollary}

\theoremstyle{definition}
\newtheorem{definition}{Definition}
\newtheorem{remark}{Remark}
\newtheorem{example}{Example}
\newcommand{\KK}{\mathbb{K}}
\newcommand{\SL}{\mathrm{SL}}
\newcommand{\PSL}{\mathrm{PSL}}
\newcommand{\GL}{\mathrm{GL}}

\newcommand{\ord}{\mathrm{ord}}
\newcommand{\rk}{\mathrm{rk\,}}
\newcommand{\QQ}{\mathbb{Q}}
\newcommand{\ZZ}{\mathbb{Z}}
\begin{document}

\title[Elements of finite order in the normalizer of a maximal torus]{Elements of finite order in the normalizer of a~maximal torus of a semisimple group}
\author{Ivan~Arzhantsev}
\address{Faculty of Computer Science, HSE University, Pokrovsky Bulvar 11, Moscow, 109028 Russia}
\email{arjantse@hse.ru}
\author{Alexey~Galt}
\address{School of Mathematical Science, Hebei Key Laboratory of Computational Mathematics and Applications, Hebei Normal University, Shijiazhuang 050024, P. R. China
and Sobolev Institute of Mathematics, Koptyuga 4, Novosibirsk,  630090 Russia}
\email{galt84@gmail.com}
\author{Alexey~Staroletov}
\address{Sobolev Institute of Mathematics, Koptyuga 4, Novosibirsk,  630090 Russia}
\email{staroletov@math.nsc.ru}
\thanks{The research of I.~Arzhantsev was done within the framework of the HSE Fundamental Research Program. 
A.~Galt was supported by the grant of the Natural Science Foundation of Hebei Province (project No. A2023205045). }
\subjclass[2020]{Primary 20G07\ Secondary 05A05, 15A18, 20F55}
\keywords{Weyl group, Coxeter element, elliptic element, algebraic group, maximal torus, normalizer, periodic component}
\begin{abstract}
We prove that the set of elements of a given finite order in the connected component $N_w$ of the normalizer $N_G(T)$ of a maximal torus $T$ of a semisimple group~$G$ is either empty or a disjoint union of finitely many irreducible subvarieties $C_i$. The dimension of each $C_i$ equals the dimension of the subspace of fixed vectors for the action of the element~$w$ of the Weyl group $W$ corresponding to the component~$N_w$. Moreover, each $C_i$ is an orbit of the action of the torus $T$ on the component $N_w$ by conjugation.
\end{abstract} 

\maketitle

%%%%%%%%%%%%%%%%%%%%%%%%%%%%%%%%%%
\section{Introduction}

It is well-known that for an arbitrary algebraic group $L$, the irreducible components coincide with the connected components, and the number of such components is finite. The connected component of the identity $L^0$ is a normal subgroup of $L$, and the cosets of $L$ modulo $L^0$ are precisely the connected components of  $L$. The quotient group $\Gamma:=L/L^0$ is called the \emph{component group} of $L$. Thus, the study of an arbitrary algebraic group $L$ often reduces to the study of the connected algebraic group $L^0$ and the finite group $\Gamma$. At the same time, interesting effects are known to arise specifically for disconnected algebraic groups of positive dimension. One such effect is the existence of periodic components.

Let $L$ be a linear algebraic group over an algebraically closed field $\KK$ of characteristic zero. If $L$ has a positive dimension, then the component $L^0$ contains elements of infinite order. However, some components $gL^0$ may consist entirely of elements of finite order. Such components are called \emph{periodic}. For example, the orthogonal group $\operatorname{O}_2(\KK)$ contains exactly two components: the component of the identity consists of matrices with determinant $1$, while the second component consists of matrices with determinant $-1$, which are reflections in the plane $\KK^2$, i.e., they have order $2$.

It is known that if $L$ contains a periodic component $gL^0$, then $L^0$ is solvable; see~\cite[Corollary~10.12]{St}, \cite[Theorem~2]{Fe}. The orders of elements in a periodic component $gL^0$ coincide and, moreover, the action of $L^0$ on $gL^0$ by conjugation is transitive, see~\cite[Theorem~10.1]{St}, \cite[Theorem~1]{Fe}.

Consider the case where the group $L$ is the normalizer $N_G(T)$ of a maximal torus $T$ of a connected semisimple linear algebraic group $G$. In this case, we assume that the field $\KK$ is algebraically closed without restrictions on the characteristic. The component $L^0$ in this case coincides with the torus $T$, and the component group $N_G(T)/T$ is isomorphic to the Weyl group $W$ of the root system of $G$.

Despite its apparent simplicity, several deep problems are associated with the group $N_G(T)$. For instance, J.~Tits~\cite{Ti} formulated the question of when the group $N_G(T)$ is split, i.e., when there exists a finite subgroup $\mathcal{W}$ in $N_G(T)$ such that $N_G(T)$ is a semidirect product $\mathcal{W}\ltimes T$. The answer was obtained independently in the paper~\cite{AH} and in a series of papers \cite{G1}--\cite{G4}. For example, for simple simply connected linear algebraic groups, splitting occurs only for type $\operatorname{A_n}$ with even $n$ and for $\operatorname{G}_2$. We also note that for compact Lie groups, this problem was solved in the paper \cite[Theorem~2]{CWW}; see also~\cite{GLO-1, GLO-2}.

In the case when the group $N_G(T)$ does not split, it is natural to study lifts of specific elements $w\in W$ to $N_G(T)$; for example, one may search an element of the minimal order in the connected component $N_w$ of the group $N_G(T)$ corresponding to $w$. It is clear that the minimal order is at least the order $\ord(w)$ of $w$ in  $W$. It is known that the minimal order of a lift of $w$ is either $\ord(w)$ or $2\cdot\ord(w)$, see~\cite{Ti} or \cite[Lemma~2.1]{AH}. In particular, every connected component of $N_G(T)$ contains an element of finite order.

Periodic components of the group $N_G(T)$ correspond to elliptic elements of the Weyl group $W$~\cite[Proposition~5]{Fe}, \cite[Theorem~4.3]{AZ}. Recall that an element $w\in W$ is called \emph{elliptic} if $w$ acts on the space of the standard linear representation of $W$ without non-zero fixed vectors. In particular, all Coxeter elements of $W$ are elliptic.

The goal of the present paper is to study the structure of the set of finite-order elements in an arbitrary connected component of the group $N_G(T)$. We prove that the elements of finite order are dense in $N_G(T)$. Theorem~\ref{tmain} states that the set of elements of a given finite order in the component $N_w$ is either empty or a disjoint union of a finite number of irreducible closed subvarieties $C_i$. The dimension of each $C_i$ is equal to the dimension of the space of fixed vectors for the action of $w$ on the space of the standard linear representation of the group $W$. Moreover, each $C_i$ is an orbit of the action of the torus $T$ on $N_w$ by conjugation. As a corollary, we obtain previously known results regarding the periodic components of the group $N_G(T)$.

In the final section, we provide two tables containing data on the number and orders of Coxeter elements and other elliptic elements in Weyl groups, the minimal lifts of these elements to the normalizer of a maximal torus and the number of fixed points of the automorphisms of the maximal torus determined by these elements. We also describe the derived subgroup of the normalizer and the subgroups in the normalizer generated by the components corresponding to Coxeter elements and elliptic elements, respectively. In the first table, the corresponding data are collected for the Weyl groups of classical simple linear groups, and in the second table, for the Weyl groups of exceptional simple linear groups.

%%%%%%%%%%%%%%%%%%%%%%%%%%%%%%%%%
\section{Preliminaries}

Let $\KK$ be an algebraically closed field. We introduce some notation related to homomorphisms between arbitrary tori $T_1$ and $T_2$. A homomorphism $\gamma\colon T_1\to T_2$ defines a linear map $\gamma^*\colon M_2\to M_1$ of the character lattices $M_2$ and $M_1$ of the tori $T_2$ and $T_1$, respectively, given by the formula $(\gamma^*\chi)(t):=\chi(\gamma(t))$ for any $t\in T_1$ and $\chi\in M_2$. A choice of bases in the lattices $M_1$ and $M_2$ determines identifications
$$
T_1\cong (\KK^{\times})^k \quad \text{and}  \quad T_2\cong (\KK^{\times})^m.
$$
Let $C=(c_{ij})$ be the matrix of the linear map $\gamma^*$ in these bases. Then, in coordinates, the homomorphism $\gamma=\gamma_C$ has the form
$$
\gamma_C\colon (\KK^{\times})^k\to(\KK^{\times})^m, \quad (t_1,\ldots,t_k)\mapsto (t_1^{c_{11}}\ldots t_k^{c_{k1}},\ldots,t_1^{c_{1m}}\ldots t_k^{c_{km}}).
$$
The composition of homomorphisms corresponds to the product of matrices, while the product of homomorphisms corresponds to the sum of matrices.

Let $\rk C$ denote the rank of the matrix $C$. It is easy to see that the image of the homomorphism $\gamma_C$ is a subtorus in $T_2$ of dimension $\rk C$, while the kernel of the homomorphism $\gamma_C$ is a finite extension of a subtorus of dimension $k-\rk C$ in $T_1$.

Let $n$ be a positive integer and $p$ be a prime number.
The symbol $(n)_p$ denotes the \emph{$p$-part} of the number $n$, i.e., the greatest power of $p$ dividing $n$. In this context, $(n)_{p'}=n/(n)_p$ is called the \emph{$p'$-part} of $n$ (or the \emph{$p$-free part}).

\begin{definition}
The \emph{multiplicity} of an integer matrix $C=(c_{ij})$ is the number of connected components of the kernel of the homomorphism $\gamma_C$.
\end{definition}

If the characteristic of $\KK$ is zero, the multiplicity of the matrix $C$ is equal to the product of non-zero invariant factors of the matrix $C,$ and in the case where $\KK$ has 
characteristic~$p$, the multiplicity is equal to the $p'$-part of this product. In particular, if the matrix $C$ is square and nonsingular, its multiplicity is equal, up to a sign, to the determinant $\det(C)$ and the $p'$-part of this determinant, respectively. For the zero matrix $C$, the multiplicity is equal to~$1$.

Note that when writing the same map $C$ in a basis of a sublattice of finite index, the multiplicity may change. For example, if we pass from a lattice basis $e_1,e_2$ to a sublattice basis $e_1, ue_2$, the matrix $C=\begin{pmatrix} 0 & 1 \\ 0 & 0 \end{pmatrix}$ transforms into the matrix $\begin{pmatrix} 0 & u \\ 0 & 0 \end{pmatrix}$ and its multiplicity changes from $1$ to $u$.

\medskip

Now let $G$ be a connected semisimple linear algebraic group over an algebraically closed field $\KK$. Consider a maximal torus $T$ of $G$, the normalizer $N_G(T)$ of the torus $T$ in $G$, and the Weyl group $W=N_G(T)/T$. We denote by $N_w$ the connected component of $N_G(T)$ corresponding to an element $w\in W$. Fix an arbitrary element $n_w$ from the component $N_w$. We have $N_w=n_wT=Tn_w$.
Let $r=\dim T$.

Each element $w\in W$ defines an automorphism $\varphi_w$ of the torus $T$, given by the formula
$$
\varphi_w(t)=n_wtn_w^{-1}.
$$
The corresponding representation of the group $W$ in the rational vector space $M_{\QQ} :=M\otimes_{\ZZ}\QQ$ spanned by the character lattice $M$ of the torus $T$ is called the standard linear representation of the group $W$.

By $\sigma(w)$ we denote the dimension of the subspace of $w$-fixed points in the space $M_{\QQ}$. In other words, $\sigma(w)$ is the multiplicity of the eigenvalue $1$ for the linear operator defined by the action of the element $w$ in the space $M_{\QQ}$.

An element $w$ of the Weyl group $W$ is called \emph{elliptic} if $\sigma(w)=0$. This definition is taken from~\cite{Lu}; in~\cite{DW}, these elements are called generalized Coxeter elements. An element $w\in W$ is called a \emph{Coxeter element} if $w$ can be represented as a product of simple reflections $s_1,\ldots,s_r \in W$ in some order. It is known that all Coxeter elements are conjugate and elliptic~\cite[Section~3.16]{Hum}. However, there exist elliptic elements that are not Coxeter elements.

The following remarkable formula holds:
$$
\prod_{i=1}^r (1+(d_i-1)t) = a_0 + a_1t + \ldots + a_rt^r=\sum\limits_{w\in W} t^{l(w)},
$$
where $d_i$ are the degrees of the free generators of the algebra of invariants of the standard linear representation of the group $W$, the coefficient $a_i$ is the number of elements $w\in W$ such that $\sigma(w)=r-i$, and $l(w)$ is the minimal number of reflections whose product is~$w$; see~\cite{ShT}, \cite{So}, \cite[Lemma~2]{CarWeyl}, \cite[Section~3.9]{Hum}. In particular, $a_0=1$, $a_1$ is the number of reflections, and $a_r$ is the number of elliptic elements in $W$. This formula shows that the function $\sigma$ takes all integer values from $0$ to $r$ on the group $W$. It also allows one to calculate the number of elliptic elements for each Weyl group, see~\cite[Section~5]{Fe}. The values of the degrees $d_i$ for each root system can be found in~\cite[Section~3.7]{Hum}.

Denote the order of $w \in W$ by $\ord(w)$. It is known (see~\cite{Ti} or \cite[Lemma~2.1]{AH}) that a lift $n_w$ of $w \in W$ to $N_G(T)$ can be chosen such that the element $n_w^{\ord(w)} \in T$ has order at most two. Let $d(w)$ be the smallest order of elements in $N_w$. From the above, it follows that $d(w)$ is equal to $\ord(w)$ or $2 \cdot \ord(w)$. In~\cite{Za}, it is shown that in the case of a simple adjoint algebraic group $G$, for an arbitrary elliptic element $w \in W$ we have $d(w) = \ord(w)$ except for a few cases for groups $G$ of types $\operatorname{C_n}$ and $\operatorname{F_4}$. All these cases are listed in~\cite[Table~3]{Za}. In the same work, the value of $d(w)$ is calculated for all elliptic elements and simply connected simple linear algebraic groups; see also~\cite[Section~5]{Fe}. Henceforth, we assume that the element $n_w$ has order $d(w)$.

\smallskip

Let $\Phi_w$ be the matrix of the operator $\varphi_w$ in some basis of the lattice $M$. Consider the matrix
$$
B_w: = E + \Phi_w + \ldots + \Phi_w^{d-1}, \quad \text{where} \  d=\ord(w).
$$

\begin{definition}
The \emph{multiplicity} of an element $w \in W$ is the multiplicity of the matrix $B_w$. We denote the multiplicity of $w$ by $m(w)$.
\end{definition}

A straightforward calculation shows that in the case of the group $G=\SL_3$ and $w=(12)$, we obtain $m(w)=1$, whereas for $G=\SL_4$ and $w=(12)$, we have $m(w)=2$. It would be interesting to find a general formula for the number $m(w)$. In the following example, we obtain such a formula in the case of the group $\SL_n$.

\begin{example}
Let $G$ be of type $\operatorname{A_{n-1}}$. In this case, $W=\operatorname{S_n}$ and an element $w\in W$ is parameterized by its cycle type $[k_1,\ldots,k_t]$, where $k_1+\ldots+k_t=n$.
We identify the elements of the group $W$ with permutation matrices, and the roots with the vectors $e_i-e_j$ in the $n$-dimensional Euclidean space $V=\langle e_1,\ldots,e_n\rangle$.
Let $d=\ord(w)$. We will show that
$$
m(w)=d^{t-1}\frac{(k_1,\ldots,k_t)}{k_1\cdots k_t},
$$
where $(k_1,\ldots,k_t)$ denotes the greatest common divisor of the numbers $k_1,\ldots,k_t$.
Indeed, let us assume for simplicity that $w$ corresponds to the permutation
$$
(1\ldots k_1)(k_1+1\ldots k_1+k_2)\ldots(k_1+\ldots+k_{t-1}+1\ldots n).
$$
Denote by $v_i=\sum\limits_{j=k_1+\ldots+k_{i-1}+1}^{k_1+\ldots+k_{i-1}+k_i}e_j$ the invariant sum corresponding to the $i$-th cycle of $w$.
Then $(w^0+w+\ldots+w^{d-1})e_j=\frac{d}{k_i}v_i$ for an index $j$ in the $i$-th cycle, so the nonzero invariant factors on the space $V$ are $\{\frac{d}{k_i}~|~1\leq i\leq t\}$. However, we need to find the factors for the action on the sublattice $\langle e_i-e_{i+1}~|~1\leq i\leq n-1\rangle$.
We have
$$
B_w(e_{k_1}-e_{k_1+1})=\frac{d}{k_1}v_1-\frac{d}{k_2}v_2,\quad \ldots, \quad
B_w(e_{k_{1}+\dots+k_{t-1}}-e_{k_{1}+\dots+k_{t-1}+1})=\frac{d}{k_{t-1}}v_{t-1}-\frac{d}{k_t}v_t,
$$
and the action is zero on the remaining vectors. One could further decompose the images in the basis $e_i-e_{i+1}$, but we will not do so, since the resulting matrix would consist of partial sums of the rows of the obtained matrix. Thus, it suffices to find the product of the nonzero invariant factors of the following matrix (considering only the nonzero rows and columns of the previous matrix):

\[\left(\begin{matrix}
\frac{d}{k_1} & 0 & \ldots & 0 & 0\\
-\frac{d}{k_2} & \frac{d}{k_2} & \ldots & 0 & 0\\
0 & -\frac{d}{k_3} &  \ddots & 0 & 0 \\
\ldots & \ldots &  \ddots & \ddots & \ldots\\
0 & 0 &\ldots &  -\frac{d}{k_{t-1}} & \frac{d}{k_{t-1}} \\
0 & 0 &\ldots &  0 & -\frac{d}{k_{t}} \\
\end{matrix}\right)\]
This matrix has size $t\times{(t-1)}$, and therefore the product of its nonzero invariant factors is equal to the greatest common divisor of all minors of order $t-1$. There are $t$ such minors.
It is easy to see that after deleting the $i$-th row, the minor is equal to $\pm\frac{d^{t-1}}{k_1\ldots \hat{k}_i\ldots k_t}$, and the greatest common divisor of these numbers is precisely $d^{t-1}\frac{(k_1,\ldots,k_t)}{k_1\ldots k_t}$.
\end{example}

%%%%%%%%%%%%%%%%%%%%%%%

\section{Main results}

We use the notation introduced in the previous section. Let us begin with a general observation.

\begin{proposition} \label{fod}
Elements of finite order are dense in the group $N_G(T)$.
\end{proposition}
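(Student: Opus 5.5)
Since $N_G(T)$ is the finite union of the components $N_w=n_wT$, $w\in W$, it suffices to show that elements of finite order are dense in each $N_w$. We take $n_w$ of finite order $d(w)$, as fixed in the preliminaries. Write $d=\ord(w)$. For $t\in T$ we compute
$$
(n_wt)^d=n_w^d\cdot \varphi_w^{-(d-1)}(t)\cdots\varphi_w^{-1}(t)\,t ,
$$
obtained by moving all copies of $n_w$ to the left. Since $\varphi_w^{-1}=\varphi_{w^{-1}}$ runs through the same cyclic group, the product $\prod_{i=0}^{d-1}\varphi_w^{-i}(t)$ equals $\gamma(t):=\prod_{i=0}^{d-1}\varphi_w^{i}(t)$. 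This $\gamma$ is the homomorphism $T\to T$ whose matrix is (the transpose of) $B_w$. Hence $(n_wt)^d=n_w^d\gamma(t)$. Here $n_w^d\in T$ has finite order, so $n_wt$ has finite order if and only if $\gamma(t)$ has finite order in $T$.

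The next step is to show that $\{t\in T:\ \gamma(t)\ \text{has finite order}\}$ is dense in $T$. This set contains the torsion subgroup $T_{\mathrm{tors}}$, since $\gamma$ is a homomorphism. It also contains the subgroup $\ker\gamma$, and hence the product $T_{\mathrm{tors}}\cdot\ker\gamma$. It would even suffice to use only $T_{\mathrm{tors}}$: the torsion subgroup of a torus $(\KK^\times)^r$ is Zariski dense. The argument is as follows. Its closure $H$ is a closed subgroup containing all $\ell$-torsion points for infinitely many primes $\ell\neq \operatorname{char}\KK$. If $H\neq T$, then there is a nontrivial character $\chi\in M$ trivial on $H$. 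But $\chi$ cannot vanish on all $\ell$-torsion points for $\ell$ large, because the $\ell$-torsion subgroup $\mu_\ell^r$ pairs perfectly with $M/\ell M$ and $\chi\notin \ell M$ for $\ell$ large. Translating, the set $n_wT_{\mathrm{tors}}$ consists of elements of finite order and is dense in $n_wT=N_w$, since left multiplication by $n_w$ is an isomorphism of varieties $T\to N_w$.

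So the proof is quite direct: every element of $n_wT_{\mathrm{tors}}$ has finite order, because $(n_wt)^d=n_w^d\gamma(t)$ with $\gamma(t)\in T_{\mathrm{tors}}$. The only real point to verify is the density of torsion points in a torus, including in positive characteristic $p$, where one uses only the primes $\ell\neq p$. That density argument, via characters and roots of unity, is the step I expect to need care. The formula for $(n_wt)^d$ is a routine rearrangement. Taking the union over $w\in W$ then gives the statement.
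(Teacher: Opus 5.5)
Your proof is correct and follows the paper's argument. The identity $(n_wt)^d=n_w^d\beta_w(t)$, with $\beta_w$ a homomorphism, shows that $n_wt$ has finite order for every torsion $t\in T$, and density of torsion points in $T$ then transfers to $N_w=n_wT$. Your only addition is an explicit justification of the Zariski density of $T_{\mathrm{tors}}$ via characters and $\ell$-torsion for primes $\ell\neq\operatorname{char}\KK$; the paper takes this fact as known.
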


In the following theorem, we provide a more explicit geometric description of the set of elements of a given finite order. Let $\nu(k,r)$ denote the number of elements of order $k$ in a torus of dimension $r$ over the ground field $\KK$. For example, $\nu(1,r)=1$ for any~$r$ and $\nu(k,0)=0$ for all~$k\ge 2$. Also, $\nu(2,r)=2^r-1$ if the characteristic of $\KK$ is not two. If $\KK$ has characteristic~$p$, then $\nu(k,r)=0$ for an arbitrary~$r$ and any~$k$ divisible by $p$. If $k$ is coprime to the characteristic of $\KK$, then $\nu(k,1)$ is equal to the value of the Euler function $\varphi(k)$. More generally, in this case $\nu(k,r)$ coincides with the number of elements of order $k$ in the group $\mathbb{Z}_k^r$. If we denote by $\psi(d)$ the number of elements of order $d$ in this group, then from the equality $\sum\limits_{d|k}\psi(d)=k^r$ and the M\"obius inversion formula, we find that
$\nu(k,r)=\psi(k)=\sum\limits_{d|k}\mu(\frac{k}{d})d^r$.

\begin{theorem} \label{tmain}
Let $m(w)$ be the multiplicity of an element $w\in W$. Then the set $D_k(w)$ of elements of order $k$ in the component $N_w$ is:
\begin{itemize}
\item[a)]
empty if $d(w)$ does not divide $k$;
\item[b)]
a disjoint union of $m(w)\nu(s,\sigma(w))$ irreducible closed subvarieties $C_i$ if $d(w)=\ord(w)$ and $k=d(w)s$;
\item[c)]
a disjoint union of irreducible closed subvarieties $C_i$ if $d(w)=2\cdot\ord(w)$ and $k=d(w)s$, where the number of such subvarieties is
$m(w)(\nu(s,\sigma(w))+\nu(2s,\sigma(w)))$ for odd $s$ and $m(w)\nu(2s,\sigma(w))$ for even~$s$.
\end{itemize}
Each subvariety $C_i$ is isomorphic to a torus of dimension $r-\sigma(w)$ and is an orbit of the action of $T$ on $N_w$ by conjugation.
\end{theorem}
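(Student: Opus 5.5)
We parametrize the component as $N_w=\{tn_w : t\in T\}$, where $n_w$ is the fixed lift of order $d(w)$. Put $d=\ord(w)$. First, write $(tn_w)^d=t\,\varphi_w(t)\cdots\varphi_w^{d-1}(t)\,n_w^d$. In additive notation on the character lattice this is the homomorphism $\beta:=\gamma_{B_w}\colon T\to T$, $t\mapsto t\varphi_w(t)\cdots\varphi_w^{d-1}(t)$. Hence $(tn_w)^d=\beta(t)\,z$, where $z:=n_w^d\in T$ has order $1$ if $d(w)=d$ and order $2$ if $d(w)=2d$. Since $w^d=1$, the element $(tn_w)^d$ lies in $T$, and its order in $T$ determines the order of $tn_w$. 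Precisely, $\ord(tn_w)=d\cdot \ord(\beta(t)z)$, since the order of $tn_w$ is a multiple of $d$. So $D_k(w)$ is empty unless $d\mid k$, and for $k=ds'$ we have
$$
D_k(w)=\{tn_w : \ord(\beta(t)z)=s'\}.
$$

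Next we describe the image and fibres of $\beta$. The image $S:=\beta(T)$ is a subtorus of dimension $\rk B_w$. Since $B_w=\sum_i\Phi_w^i$ is $d$ times the projector onto the $w$-invariants in $M_\QQ$, we get $\rk B_w=\sigma(w)$. The image $S$ is contained in $T^{w}$, and $z$ also lies in $(T^{w})$, since $z=n_w^d$ commutes with $n_w$.

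The key observation concerns the $T$-conjugation action on $N_w$, namely $s\cdot tn_w=s t\varphi_w(s)^{-1}n_w$. The orbit of $tn_w$ is therefore $t\cdot\mathrm{Im}(\alpha)\,n_w$, where $\alpha\colon s\mapsto s\varphi_w(s)^{-1}$ corresponds to the matrix $E-\Phi_w$. This image is a subtorus $R$ of dimension $\rk(E-\Phi_w)=r-\sigma(w)$. Conjugation preserves order, and $\beta\circ\alpha$ is trivial because $B_w(E-\Phi_w)=E-\Phi_w^d=0$. So $R\subseteq\ker\beta$, and $\dim\ker\beta=r-\sigma(w)=\dim R$. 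Hence $R=(\ker\beta)^0$, and the fibres of $\beta$ are unions of exactly $m(w)$ cosets of $R$, by the definition of multiplicity. Each coset $tRn_w$ is a single $T$-orbit, closed, irreducible and isomorphic to the torus $R$. This gives the final assertion.

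It remains to count, and the main work is bookkeeping of orders in $S$. We have $D_k(w)=\bigsqcup_{x}\beta^{-1}(x)n_w$ over $x\in S$ with $\ord(xz)=s'$, so the number of orbits is $m(w)$ times the number of such $x$. This requires $\beta$ to be surjective onto $S$, which holds since the image of a torus homomorphism is closed.

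In case b), $z=1$ and $d(w)=d$, so we need elements of order $s$ in the $\sigma(w)$-dimensional torus $S$, giving $m(w)\nu(s,\sigma(w))$.

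In case c), $z\ne1$ has order $2$. First we need $z\notin S$. Indeed, $S$ consists exactly of the values $(tn_w)^d$ with $z$ removed, so $z\in S$ would give $\beta(t)z=1$ for some $t$, and then some lift would have order $d$, contradicting $d(w)=2d$. So $z$ lies in the finite group $T^w/\ldots$ outside $S$, in the coset $zS$. Then the orders of elements $xz$ with $x\in S$ are all even, which gives a) in this case: $k$ must be divisible by $2d=d(w)$. Writing $s'=2s$, we count $x\in S$ with $\ord(xz)=2s$. Since $z^2=1$, we have $(xz)^{2s}=x^{2s}$. The condition $\ord(xz)=2s$ therefore becomes: $\ord(x)$ divides $2s$, and $(xz)^{j}\ne1$ for proper divisors $j$ of $2s$. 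Odd powers are never $1$ because $z\notin S$. So the condition reduces to $x^{2s}=1$ and $x^{2s/q}\neq 1$ for every odd prime $q\mid s$, and, when $s$ is even, also $x^{s}\ne1$.

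For odd $s$ this says $\ord(x)\in\{s,2s\}$, giving $\nu(s,\sigma(w))+\nu(2s,\sigma(w))$. For even $s$ it says $\ord(x)=2s$, giving $\nu(2s,\sigma(w))$. This case analysis, together with the verification that $z\notin S$, is the step I expect to need the most care, including checking the characteristic-$2$ conventions, where $\nu$ vanishes appropriately.
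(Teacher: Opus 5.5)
Your proposal is correct and follows essentially the same route as the paper. It uses the identity $(tn_w)^d=\beta_w(t)\,n_w^d$ and the ranks $\rk B_w=\sigma(w)$ and $\rk(E-\Phi_w)=r-\sigma(w)$; together with $B_w(E-\Phi_w)=0$ these identify the conjugation-orbit subtorus with $(\ker\beta_w)^0$, so each fibre of $\beta_w$ contains $m(w)$ orbits, and case c) uses the same argument that $n_w^d\notin S(w)$. The only differences are cosmetic: you write elements as $tn_w$ instead of $n_wt$, and you make the divisor bookkeeping in case c) slightly more explicit via the maximal proper divisors $s/q$.
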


\noindent Proofs of Proposition~\ref{fod} and Theorem~\ref{tmain} are given in the next section.

\medskip

Let us highlight several special cases of Theorem~\ref{tmain}.

\begin{corollary}
The set of elements of the smallest order in the component $N_w$ is irreducible if and only if $m(w)=1$ and either $d(w)=\ord(w)$, or the element $w$ is elliptic, or the characteristic of $\KK$ is equal to two.
\end{corollary}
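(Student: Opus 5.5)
The plan is to specialize Theorem~\ref{tmain} to $k=d(w)$, that is, to $s=1$, and count the pieces in the decomposition. By definition of $d(w)$, the set $D_{d(w)}(w)$ of elements of smallest order in $N_w$ is nonempty. By Theorem~\ref{tmain} it is a disjoint union of finitely many nonempty irreducible closed subvarieties $C_i$. Such a union is irreducible exactly when it consists of a single piece: if there are at least two pieces, then $C_1$ and the union of the others are two proper nonempty closed subsets covering the set. So it suffices to determine when the number of components given in b) or c) equals $1$.

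\emph{Case $d(w)=\ord(w)$.} By b) with $s=1$, the number of pieces is $m(w)\nu(1,\sigma(w))=m(w)$, since $\nu(1,r)=1$ for all $r$. Hence the set is irreducible if and only if $m(w)=1$.

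\emph{Case $d(w)=2\cdot\ord(w)$.} Here $s=1$ is odd, so c) gives $m(w)\bigl(1+\nu(2,\sigma(w))\bigr)$ pieces. We use that $m(w)\ge 1$, because a multiplicity counts connected components of a nonempty group, and that $\nu(2,\sigma(w))\ge 0$. So this number equals $1$ if and only if $m(w)=1$ and $\nu(2,\sigma(w))=0$. By the facts about $\nu$ recorded before Theorem~\ref{tmain}:
\begin{itemize}
\item $\nu(2,0)=0$;
\item $\nu(2,r)=0$ for every $r$ when the characteristic is $2$;
\item $\nu(2,r)=2^r-1\ge 1$ when $r\ge 1$ and the characteristic is not $2$.
\end{itemize}
Thus $\nu(2,\sigma(w))=0$ if and only if $\sigma(w)=0$, i.e.\ $w$ is elliptic, or the characteristic of $\KK$ equals two.

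Combining the two cases gives the stated criterion. In the first case the extra conditions in the disjunction are irrelevant, since $d(w)=\ord(w)$ already holds; in the second case exactly the conditions $m(w)=1$ together with ellipticity or characteristic two are needed. There is no real obstacle: the only points needing care are that the pieces are nonempty, so that a union of at least two of them is genuinely reducible, and that $m(w)\ge1$.
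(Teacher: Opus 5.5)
Your proof is correct and follows the paper's argument: set $s=1$ in Theorem~\ref{tmain} and determine when the component count, $m(w)$ in case b) or $m(w)\bigl(1+\nu(2,\sigma(w))\bigr)$ in case c), equals $1$. Beyond the paper, you also justify two points it leaves implicit: that a disjoint union of at least two nonempty closed pieces is reducible, and that $m(w)\ge 1$.
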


\begin{proof}
The set of elements of the smallest order corresponds to the case $s=1$. If $d(w)=\ord(w)$, then the number of components of the variety $D_{d(w)}(w)$ is $m(w)\nu(1,\sigma(w))=m(w)=1$. If $d(w)=2\cdot\ord(w)$, then the number of components is
$$
m(w)(\nu(1,\sigma(w))+\nu(2,\sigma(w)))=m(w)+m(w)\nu(2,\sigma(w)).
$$
This number is equal to $1$ if and only if $m(w)=1$ and $\nu(2,\sigma(w))=0$, which means that either $\sigma(w)=0$ or the characteristic of the field $\KK$ is equal to two.
\end{proof}

\begin{corollary}
The sets of elements of a given finite order in the component $N_w$ are at most finite if and only if $N_w=T$.
\end{corollary}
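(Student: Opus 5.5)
The statement to prove is the last corollary: the sets $D_k(w)$ are at most finite for every $k$ if and only if $N_w=T$, i.e.\ $w=1$. I will read this off from Theorem~\ref{tmain}, using the fact that each nonempty $D_k(w)$ is a finite disjoint union of subvarieties $C_i$, each isomorphic to a torus of dimension $r-\sigma(w)$.

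For the direction $w=1\Rightarrow$ finiteness, I use $\sigma(1)=r$, since the identity fixes all of $M_{\QQ}$. Hence every $C_i$ is a torus of dimension $0$, i.e.\ a point. The number of components is finite by Theorem~\ref{tmain}, so each $D_k(1)$ is finite. This also matches the direct count: $D_k(1)$ is the set of elements of order $k$ in the torus $T$, which has $\nu(k,r)$ elements.

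For the converse, suppose $w\neq 1$. The first step is to show $\sigma(w)<r$. The standard representation of $W$ on $M_{\QQ}$ is faithful, because $W$ acts faithfully on $T$ and $T$ is determined by its character lattice $M$. Therefore $w\neq 1$ acts nontrivially on $M_{\QQ}$, so its fixed subspace is proper and $r-\sigma(w)\ge 1$.

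Next I take $k=d(w)$, that is, $s=1$, which is finite by the Preliminaries. The set $D_{d(w)}(w)$ is nonempty because $N_w$ contains the element $n_w$ of order $d(w)$. By Theorem~\ref{tmain} it is a nonempty union of tori of positive dimension $r-\sigma(w)$, so it is infinite. This shows that some $D_k(w)$ is infinite whenever $N_w\neq T$.

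No step is a real obstacle. The only point needing justification is the faithfulness of $W$ on $M_{\QQ}$, which is standard.
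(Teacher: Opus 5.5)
Your proposal is correct and follows the paper's argument: both read off from Theorem~\ref{tmain} that finiteness is equivalent to $r-\sigma(w)=0$, i.e.\ to $w$ acting as the identity on $M_{\QQ}$. You also make explicit two points the paper leaves implicit, namely that $D_{d(w)}(w)$ is nonempty (it contains $n_w$) and that $W$ acts faithfully on $M_{\QQ}$, which together close the argument.
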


\begin{proof}
The finiteness condition is equivalent to $r-\sigma(w)=0$, whereas the condition ${\sigma(w)=r}$ means that $w$ acts on the space of the standard representation of $W$ as the identity operator.
\end{proof}

\begin{corollary}
All elements of the component $N_w$ have finite order if and only if $w$ is elliptic. In this case, the action of the torus $T$ on $N_w$ by conjugation is transitive, and all elements in $N_w$ have order $d(w)$.
\end{corollary}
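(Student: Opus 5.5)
The plan is to read off the statement from Theorem~\ref{tmain} specialized to $\sigma(w)=0$, plus a short dimension argument. For the converse, the plan is to construct an explicit element of infinite order when $\sigma(w)>0$.

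\emph{Elliptic case.} Suppose $w$ is elliptic, so $\sigma(w)=0$. Recall that $\nu(1,0)=1$ and $\nu(k,0)=0$ for $k\ge 2$. Plugging this into Theorem~\ref{tmain}:
\begin{itemize}
\item in case b), $D_k(w)$ is nonempty only for $s=1$;
\item in case c), the count $m(w)(\nu(s,0)+\nu(2s,0))$ for odd $s$ is nonzero only for $s=1$, and the count $m(w)\nu(2s,0)$ for even $s$ is always zero.
\end{itemize}
Hence the only possibly nonempty set is $D_{d(w)}(w)$. It is nonempty, since $n_w$ has order $d(w)$.

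By Theorem~\ref{tmain}, each component $C_i$ of $D_{d(w)}(w)$ is an irreducible closed subvariety of $N_w$ of dimension $r-\sigma(w)=r=\dim N_w$. Since $N_w=n_wT$ is irreducible, any such $C_i$ equals $N_w$. So there is exactly one component, and $D_{d(w)}(w)=N_w$. This gives all three claims at once:
\begin{itemize}
\item every element of $N_w$ has finite order;
\item every element of $N_w$ has order exactly $d(w)$;
\item $N_w=C_1$ is a single orbit of $T$ acting by conjugation, so the action is transitive.
\end{itemize}
As a byproduct, this forces $m(w)=1$ for elliptic $w$.

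\emph{Non-elliptic case.} Suppose $\sigma(w)>0$. Let $S=(T^{\varphi_w})^0$ be the identity component of the subgroup of $\varphi_w$-fixed points. Its dimension is $\sigma(w)>0$, because the fixed sublattice of $w$ in the cocharacter lattice has rank $\sigma(w)$. Elements of $S$ commute with $n_w$, so for $t\in S$ and $d=\ord(w)$,
$$(n_wt)^{d(w)}=n_w^{d(w)}t^{d(w)}=t^{d(w)}.$$
Therefore it suffices to find $t\in S$ of infinite order. The same point can be seen from Theorem~\ref{tmain}: the union $\bigcup_k D_k(w)$ is a countable union of closed subvarieties of dimension $r-\sigma(w)<r$, so it cannot cover $N_w$ when $\KK$ is uncountable.

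\emph{Main obstacle.} The delicate step is the existence of an element of infinite order in a positive-dimensional torus. This requires $\KK^\times$ to contain an element of infinite order, which fails exactly when $\KK$ is the algebraic closure of a finite field. In that situation every element of $N_G(T)$ has finite order, and already $w=1$ contradicts the ``only if'' direction. So I would state explicitly that for the converse one assumes $\KK\neq\overline{\mathbb{F}}_p$. Under this assumption one picks $c\in\KK^\times$ of infinite order and takes $t=\lambda(c)$ for a nontrivial cocharacter $\lambda$ of $S$. The elliptic direction, together with the transitivity and order statements, holds with no restriction on $\KK$.
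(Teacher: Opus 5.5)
Your proposal is correct. The elliptic direction is exactly how the paper obtains this corollary. The paper states it without separate proof, as a special case of Theorem~\ref{tmain}. With $\sigma(w)=0$, each $C_i$ is a closed irreducible subset of dimension $r=\dim N_w$ of the irreducible variety $N_w$, hence equal to $N_w$. This gives transitivity and the single order $d(w)$ at once. Your bookkeeping with $\nu(\cdot,0)$ is harmless but unnecessary once you know $D_{d(w)}(w)\neq\emptyset$.

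For the converse, the paper leaves the argument implicit. Reading it off Theorem~\ref{tmain} amounts to the dimension count you mention: $N_w$ cannot be the countable union $\bigcup_k D_k(w)$ of subvarieties of dimension $r-\sigma(w)<r$. That argument only works over an uncountable field. Your explicit construction takes $t$ in the identity component of $T^{\varphi_w}$, which has dimension $\sigma(w)>0$ and commutes with $n_w$, so $(n_wt)^{d(w)}=t^{d(w)}$. This is more elementary, does not need Theorem~\ref{tmain}, and also covers countable fields such as $\overline{\mathbb{Q}}$.

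Your caveat is a genuine correction to the statement, not a defect of your argument. The paper allows an arbitrary algebraically closed field. Over $\overline{\mathbb{F}}_p$, however, $\KK^{\times}$ is a torsion group, so every element of $N_G(T)$ has finite order, and the ``only if'' part fails already for $w=e$. That direction needs $\KK\neq\overline{\mathbb{F}}_p$, while the elliptic direction holds unconditionally.
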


\begin{remark}\label{rem2}
In~\cite[Corollary~2]{Fe}, it is noted that if a linear algebraic group $L$ contains a periodic component $gL^0$, then every element of the subgroup $L^0$ is a commutator of two elements from $L$. Indeed, since the action of the group $L^0$ on $gL^0$ by conjugation is transitive, for any $h\in L^0$ there exists $l\in L^0$ such that $gh=lgl^{-1}$, or $h=g^{-1}lgl^{-1}$. Thus, if a subgroup $S$ of the group $L$ contains a periodic component of $L$, then the derived subgroup of $S$ is determined by the derived subgroup of the finite component group of $S$. The fact that any element of $T$ is a commutator of two elements of $N_G(T)$ was previously obtained in~\cite[Lemma~2.3]{Re}.
\end{remark}

\begin{corollary}
The set of elements of a given finite order in the component $N_w$ is a union of a finite number of curves if and only if $w$ is a reflection.
\end{corollary}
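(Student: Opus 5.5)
The plan is to read this off Theorem~\ref{tmain}: turn the geometric condition into the numerical condition $\sigma(w)=r-1$, then identify the elements of $W$ with $\sigma(w)=r-1$ as exactly the reflections.

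\emph{Geometric step.} By Theorem~\ref{tmain}, each nonempty set $D_k(w)$ is a finite disjoint union of irreducible closed subvarieties $C_i$. Each $C_i$ is isomorphic to a torus of dimension $r-\sigma(w)$. Such sets exist: $D_{d(w)}(w)\neq\emptyset$, since $n_w$ has order $d(w)$. Hence "the set of elements of a given finite order is a finite union of curves" holds precisely when $r-\sigma(w)=1$. Here we read the statement as referring to the nonempty sets $D_k(w)$, since empty sets are trivially finite unions of curves.

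\emph{Combinatorial step: every reflection has $\sigma=r-1$.} A reflection $s_\alpha$ fixes the hyperplane orthogonal to $\alpha$ in $M_{\QQ}$ and acts by $-1$ on the line through $\alpha$. Hence $\sigma(s_\alpha)=r-1$.

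\emph{Combinatorial step: no other element has $\sigma=r-1$.} I would use the formula recalled in the preliminaries,
$$
\prod_{i=1}^r (1+(d_i-1)t)=\sum_{j} a_j t^j=\sum_{w\in W}t^{l(w)}.
$$
In it, $a_1$ is the number of elements $w$ with $\sigma(w)=r-1$. Comparing coefficients of $t$, $a_1$ also equals the number of $w$ with $l(w)=1$, i.e.\ the number of reflections. The reflections form a subset of $\{w:\sigma(w)=r-1\}$, and the two finite sets have the same cardinality, so they coincide.

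As an alternative to the counting argument, one can argue directly. Suppose $\sigma(w)=r-1$. Then $w$ has finite order and preserves a $W$-invariant inner product, so $w$ acts by $-1$ on the orthogonal complement of its fixed hyperplane $H$. By Steinberg's theorem, the pointwise stabilizer of $H$ in $W$ is generated by the reflections it contains. The only such reflection is the one in $H$, so $w$ is that reflection. I expect no serious obstacle; the only care needed is the interpretation regarding empty sets $D_k(w)$.
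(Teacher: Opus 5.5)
Your proof is correct and follows the paper's route. The paper leaves this corollary as an immediate special case of Theorem~\ref{tmain}: each $C_i$ is a torus of dimension $r-\sigma(w)$, and $D_{d(w)}(w)\neq\emptyset$. It combines this with the fact, recorded through the coefficient $a_1$ of the formula in the preliminaries, that the elements with $\sigma(w)=r-1$ are exactly the reflections; your remark about empty $D_k(w)$ and the Steinberg-theorem alternative are both valid but go beyond what is needed.
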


\begin{example} \label{ex1}
Consider the case $G=\SL_3$. Here, the Weyl group $W$ is isomorphic to the symmetric group $\operatorname{S}_3$. Elements of order three in $W$ are Coxeter elements, and  corresponding periodic components of the normalizer consist of elements of order three. Elements of order two in $W$ are reflections, and the corresponding components of the normalizer may contain elements of all even orders. For instance, for $w=(12)$, the component $N_w$ consists of matrices of the form
$$
\begin{pmatrix}
0 & t_1 & 0 \\
t_2 & 0 & 0 & \\
0 & 0 & -t_1^{-1}t_2^{-1}
\end{pmatrix}, \quad t_1,t_2\in\KK^{\times},
$$
and the set of elements of order $2s$ in this component is a disjoint union of curves of the form
$$
\begin{pmatrix}
0 & t & 0 \\
\epsilon t^{-1} & 0 & 0 & \\
0 & 0 & -\epsilon^{-1}
\end{pmatrix}, \quad t\in\KK^{\times},
$$
where $\epsilon$ is a primitive $s$-th root of unity in $\KK$. Note that if $s$ is divisible by the characteristic $p$ of $\KK$, the set of such primitive roots is empty.
\end{example}

\begin{remark}
The results of Theorem~\ref{tmain} are easily extended from the case of a semisimple group $G$ to the case of a reductive group $R$. If $G$ is a maximal semisimple subgroup in~$R$, then $R=(Z\times G)/H$, where $Z$ is a central subtorus in $R$ and $H$ is a finite central subgroup.
In this notation, the normalizer of the maximal torus $N_R((Z\times T)/H)$ coincides with $(Z\times N_G(T))/H$.

The Weyl group remains unchanged when we pass from $G$ to $R$. For elements of a given finite order in $(Z\times N_G(T))/H$, the first component consists of elements of bounded finite order in the torus $Z$, and the number of such elements is finite. In particular, $N_R((Z\times T)/H)$ has no periodic component in the case when $R$ is not semisimple.

If $d(w)=\ord(w)$, then the component of $N_R((Z\times T)/H)$ corresponding to the element $w$ contains elements of all orders of the form $d(w)s$, where $s$ runs through all positive integers coprime to the characteristic of the field $\KK$. If $d(w)=2\cdot\ord(w)$, then the orders of elements in the corresponding component may, depending on the choice of the subgroup $H$, range over both values of the form $\ord(w)s$ and values of the form $2\cdot\ord(w)s$. For example, the first possibility is realized for the group $\GL_2$, while the second is realized for $\KK^{\times}\times\SL_2$.
\end{remark}

In all cases, the set of elements of a given order is again a disjoint union of subvarieties $C_i$ of dimension $r-\sigma(w)$, but there may be more such subvarieties than in the semisimple case. For instance, if in Example~\ref{ex1} the group $\SL_3$ is replaced by $\GL_3$, then the subvariety of elements of order $2s$ in the component corresponding to $w=(12)$ has the form
$$
\begin{pmatrix}
0 & t & 0 \\
\epsilon_1 t^{-1} & 0 & 0 & \\
0 & 0 & \epsilon_2
\end{pmatrix}, \quad t\in\KK^{\times},
$$
where $\epsilon_1$ and $\epsilon_2$ are primitive $s_1$-th and $2s_2$-th roots of unity, respectively, such that the least common multiple of $s_1$ and $s_2$ is equal to $s$.

It is known that the number $\tau(w)$ of fixed points of the automorphism $\varphi_w$ of the torus $T$ is finite if and only if the element $w$ is elliptic~\cite{Fe}, see also Theorem~\ref{tmain}. It is easy to verify that if the elliptic elements $w$ and $w'$ are conjugate in $W$, then $\tau(w)=\tau(w')$.

\begin{proposition}\label{fix}
Let $w\in W$ be an elliptic element. Then the number $\tau(w)$ of fixed points of the automorphism $\varphi_w$ of the torus $T$ is equal to the multiplicity of the matrix $\Phi_w-E$.
\end{proposition}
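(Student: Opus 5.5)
The fixed points of $\varphi_w$ form the kernel of the homomorphism $\gamma\colon T\to T$, $t\mapsto \varphi_w(t)t^{-1}$. So I would identify this homomorphism in the coordinates of the Preliminaries and read off the size of its kernel from the definition of multiplicity.

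First, fix a basis of $M$ and identify $T\cong(\KK^\times)^r$. Since $\varphi_w^*$ has matrix $\Phi_w$, and products of homomorphisms correspond to sums of matrices, the map $\gamma=\varphi_w\cdot \iota$ (where $\iota(t)=t^{-1}$ has matrix $-E$) is $\gamma_C$ with $C=\Phi_w-E$. One has to be careful whether $\Phi_w$ is the matrix of $\varphi_w$ or of its dual. This does not matter: we could instead use the endomorphism $t\mapsto \varphi_w^{-1}(t)t^{-1}$, whose kernel is the same set. Transposition and replacing $\Phi_w$ by $\Phi_w^{-1}$ change $\Phi_w-E$ only by multiplication by the unimodular matrix $\Phi_w^{-1}$ and by a sign, so the invariant factors are unchanged.

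Second, use ellipticity. Since $\sigma(w)=0$, the eigenvalue $1$ does not occur, so $\det(\Phi_w-E)\neq 0$ and $C$ is nonsingular. Then $\rk C=r$, so the kernel of $\gamma_C$ is a finite extension of a subtorus of dimension $0$, i.e.\ a finite group. Its number of connected components is therefore just its cardinality, and this cardinality is the set of fixed points of $\varphi_w$. By the definition of multiplicity, $\tau(w)=|\ker\gamma_C|$ equals the multiplicity of $\Phi_w-E$.

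The only step needing care is the fact, quoted in the Preliminaries, that for nonsingular $C$ the multiplicity is $|\det C|$, or its $p'$-part in characteristic $p$. If a justification is wanted, I would put $C$ into Smith normal form $C=UDV$ with $U,V\in\GL_r(\ZZ)$. Then $\gamma_C$ is a composition of automorphisms of $T$ with the diagonal map $(t_i)\mapsto(t_i^{d_i})$. The kernel of the diagonal map is $\prod\mu_{d_i}$, which has $\prod (d_i)_{p'}$ elements, since $x^{d}=1$ has $(d)_{p'}$ roots in characteristic $p$. I expect no real obstacle beyond keeping the dual/transpose conventions straight.
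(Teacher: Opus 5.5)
Your proposal is correct and follows the paper's proof. In both, the fixed-point set is the kernel of $t\mapsto\varphi_w(t)t^{-1}$, whose matrix is $\Phi_w-E$; ellipticity makes this matrix nonsingular, so the kernel is finite, and the definition of multiplicity gives $\tau(w)$. Your remarks on the transpose/inverse conventions and the Smith normal form computation of $|\ker\gamma_C|$ are sound extras that the paper simply takes from the Preliminaries.
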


It follows from Proposition~\ref{fix} that $\tau(w)$ does not depend on the choice of $G$ up to local isomorphism. In particular, this number remains unchanged when we pass from a simply connected group $G$ to an adjoint group $G$. Indeed, when passing to a finite covering of the torus at the level of character lattices, we obtain a superlattice of finite index. If we change the representation of the linear operator $\Phi_w-E$ by a matrix in a lattice basis to a matrix in a superlattice basis of finite index, then the determinant of the matrix and its $p'$-part will remain the same.

\begin{corollary} \label{nb}
If $w\in W$ is a Coxeter element, then $\tau(w)$ is equal to the determinant of the Cartan matrix $\mathcal{C}$ of the root system of $G$ if $\KK$ has characteristic zero, and to the $p'$-part of this determinant if $\KK$ has characteristic $p$.
\end{corollary}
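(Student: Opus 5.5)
By Proposition~\ref{fix}, $\tau(w)$ equals the multiplicity of $\Phi_w-E$. Since $w$ is elliptic, this matrix is nonsingular, so $\tau(w)$ is $|\det(\Phi_w-E)|$ in characteristic zero and its $p'$-part in characteristic $p$. Moreover, by the remark after Proposition~\ref{fix}, this quantity does not change when we pass to a superlattice of finite index. We may therefore compute $\det(\Phi_w-E)$ on any convenient full-rank lattice in $M_{\QQ}$, and I will use the root lattice. Since $\tau$ is constant on conjugacy classes and all Coxeter elements are conjugate, we may also take $w=s_1s_2\cdots s_r$ for the simple reflections in a fixed order. The claim then reduces to the identity
$$
|\det(w-E)| = \det\mathcal{C}
$$
for the action of $w$ on the root lattice written in the basis of simple roots $\alpha_1,\ldots,\alpha_r$.

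To prove this identity I would use the classical factorization of $w-E$ through the Cartan matrix, as in Steinberg and Coleman. Write $s_i(x)=x-\langle x,\alpha_i^\vee\rangle\alpha_i$. Let $U$ be the strictly upper triangular part of $\mathcal{C}$ and $L$ the lower triangular part including the diagonal $2$'s, so $\mathcal{C}=L+U$ (up to the choice of transpose convention). Expanding $w=s_1\cdots s_r$ on each $\alpha_j$ inductively, I expect to obtain a relation of the form
$$
(E+U')\,w=E-L'
$$
for suitable unitriangular and triangular pieces of $\mathcal{C}$. Concretely, set $A=E-U$ with $U$ strictly upper triangular, and let $B$ be the remaining part. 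Then $w=A^{-1}B$ with $B-A=-\mathcal{C}$, and hence
$$
w-E=A^{-1}(B-A)=-A^{-1}\mathcal{C}.
$$
Because $A$ is unitriangular, $\det A=1$, and so $\det(w-E)=\pm\det\mathcal{C}$. Checking this factorization is the main step: an induction on $r$, or a direct computation of $w^{-1}$ applied to the fundamental weights or simple roots, using that each $s_i$ changes only the $\alpha_i$-coordinate.

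As an alternative route that avoids index bookkeeping, I could use the known eigenvalues of a Coxeter element, namely $e^{2\pi i m_j/h}$ with exponents $m_j$. This gives $\det(E-w)=\prod_j\bigl(1-e^{2\pi i m_j/h}\bigr)$, which one can compare with the known formula $\det\mathcal{C}=|P/Q|$ type by type. That is less uniform, so I will prefer the factorization above. The main obstacle is getting the triangular decomposition right with consistent conventions for the rows and columns of $\mathcal{C}$. Once it holds, passing to the $p'$-part in characteristic $p$ follows immediately from the definition of multiplicity.
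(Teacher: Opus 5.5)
Your proof is correct, but it is organized differently from the paper's. The paper works on the weight lattice $P$. It quotes Bourbaki for the fact that $1-w$ maps $P$ onto the root lattice $Q$. It then notes that the Cartan matrix is (the transpose of) the matrix of $Q\hookrightarrow P$ in the bases of simple roots and fundamental weights, so $|\det(E-\Phi_w)|=[P:Q]=\det\mathcal{C}$. You stay on $Q$ and prove the identity $w-E=-A^{-1}\mathcal{C}$ with $A$ unitriangular. The paper's route takes two lines given the citation, and it ties $\tau(w)$ directly to $[P:Q]$, which its subsequent remark about the center of the simply connected group uses. Yours is self-contained and gives an explicit factorization, at the price of the index bookkeeping you flag.

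On that bookkeeping, take $\mathcal{C}_{ij}=\langle\alpha_j,\alpha_i^\vee\rangle$, with $U$ its strictly upper part and $L$ its lower part including the diagonal. Unwinding $s_1\cdots s_r$ then gives $(E+U)w=E-L$, so $A=E+U$. Your ``$A=E-U$'' flips the sign relative to your own first display. This is harmless, since only $\det A=1$ is used.

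The quickest check is on fundamental weights. From $s_j\varpi_i=\varpi_i-\delta_{ij}\alpha_i$ you get $(1-w)\varpi_i=s_1\cdots s_{i-1}\alpha_i\in\alpha_i+\sum_{j<i}\ZZ\alpha_j$. So $1-w$ is unitriangular from the basis $(\varpi_i)$ to the basis $(\alpha_i)$. This is precisely the Bourbaki fact, so the two proofs rest on the same computation.

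Finally, $Q$ is a sublattice of $M$, not a superlattice. This does not matter: since $\Phi_w-E$ is nonsingular, its multiplicity is $|\det(\Phi_w-E)|$ (or its $p'$-part), which does not depend on the choice of $w$-stable full-rank lattice. So your reduction is valid.
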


\noindent Proofs of Proposition~\ref{fix} and Corollary~\ref{nb} are given in the next section.

\smallskip

If $Z$ is the center of the group $G$, then the elements of $Z$ lie in the set of fixed points of any inner automorphism. On the other hand, if $G$ is simply connected, the order of its center is exactly equal to $\det(\mathcal{C})$ in the case of a field of characteristic zero and to the $p'$-part of this number in the case of a field of characteristic~$p$. This shows that for a Coxeter element $w$ and a simply connected group $G$, the set of fixed points of the automorphism $\varphi_w$ coincides with the center $Z$. It follows that $\tau(w) \le \tau(w_1)$ for a Coxeter element~$w$ and any elliptic element $w_1$.

As an example, let us note that the fixed elements in the maximal torus of $\PSL_n=\SL_n/Z(\SL_n)$ with respect to the automorphism $\varphi_w$, where $w=(12\ldots n)$ is a Coxeter element, are precisely the classes $\text{diag}(1,\epsilon,\ldots,\epsilon^{n-1})Z(\SL_n)$, where $\epsilon$ runs through the $n$-th roots of unity.

\section{Proofs of main results}

Let us begin with several auxiliary observations. Each element $w\in W$ defines an automorphism $\varphi_w$ of the torus $T$ given by the formula $\varphi_w(t)=n_wtn_w^{-1}$.
The map
 $$
 \alpha_w\colon T\to T,  \quad t\mapsto \varphi_w^{-1}(t)t^{-1}
 $$
is a homomorphism of algebraic groups, the kernel of which is the subgroup of fixed points of the automorphism $\varphi_w$. Let $T(w)$ denote the image of the homomorphism $\alpha_w$. This is a closed subtorus of the torus $T$. In particular, $T(w)=\{1\}$ if and only if $w=e$, and $T(w)=T$ if and only if the automorphism $\varphi_w$ has a finite number of fixed points in the torus $T$.

\begin{lemma} \label{lem}
The orbits of the action of the torus $T$ on the component $N_w$ by conjugation are subsets of the form $n_wK$, where $K$ is a coset of $T$ with respect to the subtorus $T(w)$. In particular, for a fixed $w\in W$, all these orbits are closed and have the same dimension.
\end{lemma}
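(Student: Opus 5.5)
We compute the conjugation action directly. Take an element $n_wt_0\in N_w$ and some $t\in T$. Since $T$ is commutative and $n_w^{-1}tn_w=\varphi_w^{-1}(t)$, we get
$$
t(n_wt_0)t^{-1}=n_w(n_w^{-1}tn_w)t_0t^{-1}=n_w\,\varphi_w^{-1}(t)\,t^{-1}\,t_0=n_w\,\alpha_w(t)\,t_0 .
$$
Therefore the $T$-orbit of $n_wt_0$ is $n_w\,\alpha_w(T)\,t_0=n_w\,(T(w)t_0)$. The set $T(w)t_0$ is exactly the coset of $t_0$ with respect to $T(w)$. Every element of $N_w$ can be written uniquely as $n_wt_0$, because $N_w=n_wT$ and left multiplication by $n_w$ is a bijection $T\to N_w$. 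Hence the orbits are exactly the sets $n_wK$, where $K$ runs over the cosets $T/T(w)$. In particular, different cosets give disjoint orbits.

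For the remaining claims we use the facts recalled in the preliminaries. The map $\alpha_w$ is a homomorphism of algebraic groups, so by the discussion of homomorphisms of tori its image $T(w)$ is a closed subtorus. Each coset $T(w)t_0$ is then closed in $T$ and has dimension $\dim T(w)$. Left multiplication by $n_w$ is an isomorphism of varieties $T\to N_w$, and $N_w$ is closed in $N_G(T)$. So every orbit $n_wT(w)t_0$ is closed, is isomorphic to the torus $T(w)$, and has dimension $\dim T(w)$, independent of the orbit.

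None of these steps is a real obstacle. The one point needing care is the direction of conjugation: we must check that the formula produces $\varphi_w^{-1}(t)t^{-1}$, matching the paper's definition of $\alpha_w$. Even with the opposite convention the image would be the same subtorus, since $\varphi_w(t)t^{-1}=\alpha_w(\varphi_w(t))^{-1}$. For later use in Theorem~\ref{tmain} we also record the dimension of the orbits. Let $\Phi_w$ be the matrix of $\varphi_w$ on the character lattice $M$. The dual map $\alpha_w^*$ has matrix $\Phi_w^{-1}-E$, whose rank is $r-\sigma(w)$. So $\dim T(w)=r-\sigma(w)$.
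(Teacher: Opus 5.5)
Your proof is correct and follows the paper's argument: the same computation $t(n_wt_0)t^{-1}=n_w\varphi_w^{-1}(t)t^{-1}t_0$ identifies the orbit of $n_wt_0$ with $n_w\,t_0T(w)$. You also spell out why the orbits are closed and of equal dimension (and that $\dim T(w)=r-\sigma(w)$), which the paper leaves implicit here or defers to the proof of Theorem~\ref{tmain}.
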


\begin{proof}
For any $t,t'\in T$, we have
$$
tn_wt't^{-1}=n_wn_w^{-1}tn_wt't^{-1}=n_w\varphi_w^{-1}(t)t't^{-1}=n_wt'\varphi_w^{-1}(t)t^{-1}.
$$
This shows that the $T$-orbit of the point $n_wt'$ with respect to the conjugation coincides with $n_wK$, where $K=t'T(w)$.
\end{proof}

\begin{proof}[Proof of Theorem~\ref{tmain}]
Let $d=\ord(w)$ and assume that $n_w$ is an element of the smallest order in the component $N_w$. Recall that $d(w)$ denotes the order of $n_w$ and that $d(w)=d$ or $2d$.

Note that for any $t\in T$, we have
$$
(n_wt)^d=n_w^d\beta_w(t), \quad \text{where} \quad \beta_w(t)=(n_w^{1-d}tn_w^{d-1})\ldots(n_w^{-1}tn_w)t=t\varphi_w(t)\ldots\varphi_w^{d-1}(t).
$$
The map $\beta_w\colon T\to T$ is a homomorphism. Let us fix a basis in the character lattice of the torus $T$, and let $\Phi_w$, $A_w$, and $B_w$ be the matrices of the homomorphisms $\varphi_w$, $\alpha_w$, and $\beta_w$ in this basis, respectively. Since the addition of matrices corresponds to the multiplication of homomorphisms, we have
$$
A_w=\Phi_w^{-1}-E \quad \text{and} \quad B_w=E+\Phi_w+\ldots+\Phi_w^{d-1}.
$$
It is easy to see that $B_wA_w=0$. This means that the image $T(w)$ of $\alpha_w$ lies in the kernel of~$\beta_w$. Furthermore, the rank of the matrix $A_w$ is $r-\sigma(w)$. Since the matrix $\Phi_w$ has order~$d$, this matrix is diagonalizable and its eigenvalues $\epsilon$ are $d$-th roots of unity. The eigenvalues of the matrix $B_w$ are of the form
$$
1+\epsilon+\ldots+\epsilon^{d-1}.
$$
This number is equal to $d$ if $\epsilon=1$, and is $0$ otherwise. This shows that the rank of $B_w$ is~$\sigma(w)$. Thus, the subtorus $T(w)$ is the identity component of the kernel of $\beta_w$, and the image of this homomorphism is a subtorus $S(w)$ of dimension $\sigma(w)$ in $T$. By the definition of $m(w)$, the kernel of $\beta_w$ is the union of $m(w)$ cosets $K_i$ of the torus $T$ by the subtorus~$T(w)$.

\smallskip

Consider two cases. First, let $d(w)=d$. If $d$ does not divide $k$, then $(n_wt)^k$ does not lie in $T$. Consequently, this element cannot be equal to the identity, and the set $D_k(w)$ is empty. Next, we assume $k=ds$. In this case, $(n_wt)^k=\beta_w(t)^s$. Thus, the element $n_wt$ lies in $D_k(w)$ if and only if $\beta_w(t)$ is an element of order $s$ in the subtorus $S(w)$. There are exactly $\nu(s,\sigma(w))$ such elements. Therefore, the set $D_k(w)$ is identified with the union of $m(w)\nu(s,\sigma(w))$ subvarieties $C_i=n_wK_i$, where $K_i$ are cosets of $T$ with respect to the subtorus $T(w)$. According to Lemma~\ref{lem}, these subvarieties coincide with the orbits of the action of $T$ on $N_w$ by conjugation.

\smallskip

Now let $d(w)=2d$. Then $t_0:=n_w^d$ is an element of order two in $T$. This element does not lie in the subtorus $S(w)$. Indeed, if $t_0=\beta_w(t)$, then
$$
(n_wt)^d=n_w^d\beta_w(t)=t_0t_0=e.
$$
This contradicts the condition $d(w)=2d$.

If $d$ does not divide $k$, then $(n_wt)^k$ does not lie in $T$ and the set $D_k(w)$ is empty. Next, we assume $k=dl$. Then $(n_wt)^k=t_0^l\beta_w(t)^l$. If $l$ is odd, the element $(n_wt)^k=t_0\beta_w(t)^l$ cannot be equal to the identity, and the set $D_k(w)$ is again empty. Therefore, we further assume that $k=d(w)s$, so $(n_wt)^k=\beta_w(t)^{2s}$.
It follows that $\beta_w(t)^{2s}=1$. If, moreover, the order of $n_wt$ is strictly less than $k$, then by what has been proven, this order is equal to $2ds'$ for some proper divisor $s'$ of $s$. In this case, $\beta_w(t)^{2s'}=1$. Consequently, if the order of $n_wt$ is $k$, the element $\beta(t)$ either has order $2s$ in $S(w)$ or has order $s''$, where the divisor $s''$ of $2s$ does not divide any proper even divisor of $2s$. The latter is possible only if $s$ is odd and $s''=s$.

Thus, in the case of even $s$, we find that the set $D_k(w)$ is the union of $m(w)\nu(2s,\sigma(w))$ subsets $C_i=n_wK_i$, where $K_i$ are cosets of $T$ with respect to $T(w)$. For odd $s$, the number of subsets $C_i=n_wK_i$ is ${m(w)(\nu(s,\sigma(w))+\nu(2s,\sigma(w)))}$: the first summand in this sum corresponds to the case $(n_wt)^{ds}=t_0$, and the second to the case $(n_wt)^{ds}=t_0s_0$, where $s_0$ is an element of order two in $S(w)$.

\smallskip

Theorem~\ref{tmain} is proved.
\end{proof}

\begin{proof}[Proof of Proposition~\ref{fod}]
Since a homomorphism maps elements of finite order to elements of finite order, for any element of finite order $t\in T$, the element $\beta_w(t)$, and thus the element $n_wt$, have finite order. Since elements of finite order are dense in the torus $T$, we conclude that elements of finite order are dense in the component $N_w=n_wT$.
\end{proof}

\begin{proof}[Proof of Proposition~\ref{fix}]
The condition $\varphi_w(t)=t$ is equivalent to the condition ${\varphi_w(t)t^{-1}=e}$, that is, $t$ lies in the kernel of the homomorphism $t\mapsto \varphi_w(t)t^{-1}$. The matrix of this homomorphism is $\Phi_w-E$. Since $w$ is elliptic, this matrix is nonsingular, and by definition, the multiplicity of this matrix is equal to the order of the kernel of the corresponding homomorphism.
\end{proof}

\begin{proof}[Proof of Corollary~\ref{nb}]
If $w\in W$ is a Coxeter element, then the map $1-w$ maps the weight lattice of the root system of $G$ onto the root lattice; see, for example, \cite[(VI)$\S$1, ex. 22a)]{Bur}. At the same time, the Cartan matrix is the transpose of the embedding matrix of the root lattice into the weight lattice, see \cite[(VI)$\S$1.10]{Bur}). This shows that
$$
|\det(\mathcal{C})|=|\det(\mathcal{C}^T)|=|\det(E-\Phi_w)|=|\det(\Phi_w-E)|.
$$
\end{proof}

%%%%%%%%%%%%%%%%%%%%%%%%%
\section{Tables}

In this section, we present the results obtained above, as well as well-known results, collected in two tables: for classical and exceptional simple linear groups.

Let us fix the notation used in the tables. A cyclic group of order $n$ is denoted by $\mathbb{Z}_n$ or simply $n$, and an abelian group of order $p^n$ with prime exponent $p$ is denoted by $p^n$. For two groups $A$ and $B$, the symbol $A.B$ denotes a group $G$ with a normal subgroup $A$ and a quotient group $B \simeq G/A$. The symbol $\operatorname{O_n}(q)$ denotes the simple orthogonal group of dimension $n$ over a field with $q$ elements.

The number of Coxeter elements of the Weyl group $W$ is denoted by $l_c$, and the order of a Coxeter element $w_c$ is denoted by $|w_c|$ for brevity. The latter is also called the Coxeter number. Its value is $|\Phi|/r$, where $|\Phi|$ is the total number of roots, see \cite[Proposition~3.18]{Hum}. The minimal order of a lift of an element $w_c$ to $N_G(T)$ is written as $|n_c|_{ad}$ or $|n_c|_{sc}$ for the case of an adjoint or simply connected group $G$, respectively. The number of fixed points of the automorphism $\varphi_{w_c}$ is denoted by $\tau(w_c)$.

For elliptic elements, the notation is similar, with the index $c$ replaced by $e$. Unlike Coxeter elements, there may be several conjugacy classes of elliptic elements in the Weyl group, and for brevity, we use the following conventions. The entry $\{d,2d\}$ in Table~\ref{Table_Classic} means that for different elliptic elements, the minimal order of a lift can be either $d$ or $2d$. For a specific elliptic element, the exact answer can be found in \cite{Za}. Note that the value $d$ may vary across different conjugacy classes of elliptic elements. In the case of exceptional groups, the exact answer is also given in \cite{Galt_Survey}, where the minimal orders of lifts for all elements of the Weyl group are found.

In Table~\ref{Table_Exceptional}, the row $|w_e|$ contains the orders of representatives of the conjugacy classes of elliptic elements. In the case of several conjugacy classes of elements of a given order, the subscripts indicate their number. The indices in rows $|n_e|_{ad}$, $|n_e|_{sc}$, and $\tau(w_e)$ are defined similarly. Note that the conjugacy class corresponding to Coxeter elements does not appear in any of these rows.

For example, in the $\operatorname{E_8}$ column of Table~\ref{Table_Exceptional}, the entry $|w_e|=4_2$ means that there are two conjugacy classes of elliptic elements of order 4, and the entry $\tau(w_e)=\{16,64\}$ below means that for one class, the number of fixed points of the automorphism $\varphi_{w_e}$ is 16, and for the other, it is 64. The symbol $+$ means that the minimal order of the preimage is equal to the order of the corresponding element of the Weyl group.

At the end of the tables, the derived subgroup $W'$ of the Weyl group and the subgroup $\operatorname{C}$ generated by all Coxeter elements are specified. For the subgroup $\operatorname{E}$ generated by all elliptic elements of the Weyl group, direct calculations show that $\operatorname{C}=\operatorname{E}$, except for the root system $\operatorname{F_4}$. In the latter case, $|W:\operatorname{E}|=2$ and $|\operatorname{E}:\operatorname{C}|=2$.

\begin{remark}
The structure of the group $N_{\operatorname{C}}$ generated by all periodic components follows directly from the tables, since $N_{\operatorname{C}}=\langle T, \operatorname{C}\rangle$. As noted in Remark~\ref{rem2}, the derived subgroup $N'$ of the normalizer $N_G(T)$ contains every element of the maximal torus $T$; therefore, $N'$ consists of the components corresponding to the elements of $W'$. In particular, it follows from the tables that $N'\leq N_{\operatorname{C}}\leq N_G(T)$, and both inclusions are strict only for type~$\operatorname{G}_2$.
\end{remark}

According to Corollary~\ref{nb}, for a Coxeter element $w$, the value $\tau(w)$ coincides with the order of the center of the simply connected group, which is
well-known; see, e.g.,~\cite[Table~3]{VinbergOn}.

Now we explain how the values $\tau(w_e)$ are found for an arbitrary elliptic element $w_e$. The parametrization of conjugacy classes of the Weyl group ${W}$ of classical type is given in~\cite[\S7]{CarWeyl}. All elliptic elements of the group ${W(\operatorname{A_n})}$ are Coxeter elements. In the group ${W(\operatorname{B_n})}\simeq {W(\operatorname{C_n})}$, the conjugacy classes of elliptic elements are in one-to-one correspondence with the partitions of the number~$n$. The group ${W(\operatorname{D_n})}$ is a subgroup of index 2 in the group ${W(\operatorname{B_n})}$, and its conjugacy classes of elliptic elements are in one-to-one correspondence with the partitions of $n$ into an even number of parts.

\begin{proposition}
Let $G$ be a simple linear algebraic group with root system $\Phi\in\{\operatorname{B_n}, \operatorname{C_n}, \operatorname{D_n}\}$. If $w\in W$ is an elliptic element corresponding to the partition $n=n_1+n_2+\ldots+n_h$ ($h$ is even in the case $\Phi=\operatorname{D_n}$), then $\tau(w)=2^h$.
\end{proposition}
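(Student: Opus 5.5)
By Proposition~\ref{fix}, $\tau(w)$ equals the multiplicity of the matrix $\Phi_w-E$. For elliptic $w$ this is $|\det(\Phi_w-E)|$, or its $p'$-part in characteristic $p$. The remark after Proposition~\ref{fix} shows that this number does not change when we pass to a lattice commensurable with the character lattice. So I would compute $\det(E-\Phi_w)$ in the most convenient lattice, namely the standard lattice $\ZZ^n=\langle e_1,\ldots,e_n\rangle$. Here $W(\operatorname{B_n})\simeq W(\operatorname{C_n})$ acts by signed permutations, and $W(\operatorname{D_n})$ is the subgroup with an even number of sign changes. For each of the three root systems, $\ZZ^n$ has finite index in the weight lattice and contains the root lattice (up to rescaling by $2$ for $\operatorname{C_n}$ versus $\operatorname{B_n}$, which doesn't matter). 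Hence $\ZZ^n$ is commensurable with $M$, and $|\det(E-\Phi_w)|$ is the same whichever lattice we use.

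Next I would use the parametrization recalled from \cite[\S7]{CarWeyl}. An elliptic element attached to the partition $n=n_1+\ldots+n_h$ is, up to conjugacy, a product of $h$ disjoint ``negative cycles''. On the block $\langle e_{a+1},\ldots,e_{a+n_i}\rangle$ it acts by
$e_{a+1}\mapsto e_{a+2}\mapsto\ldots\mapsto e_{a+n_i}\mapsto -e_{a+1}$.
In type $\operatorname{D_n}$ each negative cycle contributes one sign change, so the condition that $h$ be even is exactly membership in $W(\operatorname{D_n})$. The conjugacy classes of $W(\operatorname{D_n})$ may split, but conjugate elements have the same $\tau$, so this does not matter. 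The matrix $\Phi_w$ is block diagonal, hence $\det(E-\Phi_w)=\prod_{i}\det(E-P_{n_i})$, where $P_m$ is the matrix of a negative $m$-cycle.

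The key computation is $\det(E-P_m)$. The characteristic polynomial of $P_m$ is $x^m+1$, since $P_m^m=-E$ and $P_m$ is a companion-type matrix. Therefore $\det(xE-P_m)=x^m+1$, and evaluating at $x=1$ gives $\det(E-P_m)=2$. One can also see this directly: the eigenvalues of $P_m$ are the $m$-th roots of $-1$, so none equals $1$ (consistent with ellipticity), and $\prod_{\zeta^m=-1}(1-\zeta)=1^m+1=2$. Multiplying over the blocks gives $|\det(\Phi_w-E)|=2^h$.

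The main point requiring care is characteristic $2$. There the $p'$-part of $2^h$ is $1$, so the formula $\tau(w)=2^h$ should be read in characteristic $\neq2$, or with the $p'$-convention; I would state this caveat explicitly. The other step needing justification is that the multiplicity is invariant under commensurable lattices and not just superlattices. This holds because $\Phi_w-E$ is nonsingular and its determinant is basis-independent over $\QQ$: the determinant of the operator is the same in any lattice basis, since it is defined on $M_{\QQ}$. That settles it cleanly.
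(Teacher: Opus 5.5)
Your proof is correct, and its skeleton matches the paper's: block-diagonalize $\Phi_w$ into $h$ blocks, each contributing a factor $2$ to $|\det(\Phi_w-E)|$. What differs is how you get the blocks and the factor. The paper cites Carter's admissible diagrams to write $w$ as a product of Coxeter elements of subsystems of type $\operatorname{B_{n_i}}$. It then applies Corollary~\ref{nb}: the Cartan matrix of $\operatorname{B_{n_i}}$ (or $\operatorname{A_1}$ when $n_i=1$) has determinant $2$. You instead work directly in the signed-permutation model on $\ZZ^n$ and read off $\det(E-P_m)=2$ from the characteristic polynomial $x^m+1$ of a negative $m$-cycle. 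A negative $m$-cycle is exactly a Coxeter element of $\operatorname{B_m}$, so the two computations agree block by block.

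Your route is more elementary and self-contained, and it handles $\operatorname{D_n}$ more transparently. $\operatorname{D_n}$ has no short roots, so it contains no subsystems of type $\operatorname{B_{n_i}}$. The paper's $\operatorname{B_{n_1}}\times\cdots\times\operatorname{B_{n_h}}$ decomposition must therefore be read inside $W(\operatorname{B_n})\supset W(\operatorname{D_n})$, acting on the same rational space. That is harmless precisely because the determinant depends only on the operator on $M_{\QQ}$, which is your commensurability remark. The paper's route, in exchange, links the answer to the Cartan-determinant interpretation of $\tau$ for Coxeter elements.

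Your characteristic-$2$ caveat is also correct. The paper's proof uses $\tau(w)=|\det(\Phi_w-E)|$, which is valid only when $\operatorname{char}\KK\neq 2$; in characteristic $2$ the $2'$-part of $2^h$ is $1$, so $\tau(w)=1$.

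One small correction: no rescaling is needed for $\operatorname{C_n}$. In the standard realizations, $\ZZ^n$ lies between the root lattice and the weight lattice for all three types.
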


\begin{proof}
It follows from~\cite[Proposition~24]{CarWeyl} that an elliptic element $w$ has an admissible diagram of the form $\operatorname{B_{n_1}}\times\operatorname{B_{n_2}}\times\ldots\times\operatorname{B_{n_h}}$. Consequently, there exists a basis $$\{e_{11},\ldots,e_{1n_1}, e_{21},\ldots,e_{2n_2},\ldots,e_{r1},\ldots,e_{hn_h}\},$$ consisting of mutually orthogonal sets of vectors, i.e., $(e_{ij},e_{kl})=0$ for all $i\neq k$. In this basis, the matrix $A_w$ of $w$ has a block-diagonal form, where each block corresponds to an element with a diagram of type $\operatorname{B_{n_i}}$. An element with a diagram of type $\operatorname{B_{n_i}}$ corresponds to a Coxeter element $w_i$ for the root subsystem $\operatorname{B_{n_i}}$. In the case $n_i=1$, we identify the root systems $B_1$ and $A_1$, where $\tau(w_i)=2$. Thus,
$$\tau(w)=|\operatorname{det}(A_w-E)|=|\operatorname{det}(A_{w_1}-E_1)\ldots\operatorname{det}(A_{w_h}-E_h)|=2^h.$$
\end{proof}

For the exceptional groups, since the order of the corresponding Weyl group is bounded, the values $\tau(w)$ for elliptic elements $w$ were obtained by direct calculations according to Proposition~\ref{fix}.

\begin{table}[H]
	\centering
	\caption{Weyl groups for classical simple groups}\label{Table_Classic}
	\begin{tabular}{|c|c|c|c|c|}
		\hline
		& $\operatorname{A_{n-1}}$ & $\operatorname{B_n}, \, n\geq2$ & $\operatorname{C_n}, \, n\geq2$ & $\operatorname{D_n}, \, n\geq4$ \\ \hline
		${W}$ & $\operatorname{S_n}$ & $\operatorname{2^n\rtimes S_n}$ & $\operatorname{2^n\rtimes S_n}$ & $\operatorname{2^{n-1}\rtimes S_n}$  \\ \hline
		$|{W}|$ & $n!$ & $2^nn!$ & $2^nn!$ & $2^{n-1}n!$  \\ \hline
		$l_c$ & $(n-1)!$ & $2^{n-1}(n-1)!$ & $2^{n-1}(n-1)!$ & $2^{n-2}(n-2)!n$  \\  \hline
		$|w_c|$ & $n$ & $2n$ & $2n$ & $2(n-1)$  \\  \hline
		$|n_c|_{ad}$ & $n$ & $2n$ & $2n$ & $2(n-1)$  \\  \hline
		$|n_c|_{sc}$ & $2n$,  \text{$n$ even} 	& $2n$,  $n\equiv0,3\!\pmod4$ & $4n$ & $2(n-1)$,  $n\equiv0,1\!\pmod4$  \\
		& $n$,  \text{$n$ odd} & $4n$,  $n\equiv1,2\!\pmod4$ &  & $4(n-1)$,  $n\equiv2,3\!\pmod4$  \\  \hline
		$\tau(w_c)$ & $n$ & $2$ & $2$ & $4$  \\  \hline
		$l_e$ & $(n-1)!$ & $(2n-1)!!$ & $(2n-1)!!$ & $(2n-3)!!(n-1)$  \\  \hline
		$|w_e|$ & $-$ & $d$ & $d$ & $d$  \\  \hline
		$|n_e|_{ad}$ & $-$ & $d$ & $\{d, 2d\}$ & $d$  \\  \hline
		$|n_e|_{sc}$ & $-$ & $\{d, 2d\}$ & $2d$ & $\{d, 2d\}$  \\  \hline
		$\tau(w_e)$ & $-$ & $2^h$ & $2^h$ & $2^h$  \\  \hline
		${W}'$ & $\operatorname{A_n}$ & $\operatorname{2^{n-1}\rtimes A_n}$ & $\operatorname{2^{n-1}\rtimes A_n}$ & $\operatorname{2^{n-1}\rtimes A_n}$  \\  \hline
		$\operatorname{C}$ & ${W},\,  \text{$n$ even}$ 	& ${W},\,  \text{$n$ even}$  & ${W},\, \text{$n$ even}$  & ${W},\,  \text{$n$ even}$   \\
		& ${W}',\,  \text{$n$ odd}$ & ${W}',\,  \text{$n$ odd}$  & ${W}',\,  \text{$n$ odd}$  & ${W}',\,  \text{$n$ odd}$   \\  \hline
	\end{tabular}
\end{table}

\begin{table}[H]
	\centering
	\caption{Weyl groups for exceptional simple groups}\label{Table_Exceptional}
	\begin{tabular}{|c|c|c|c|c|c|}
		\hline
		& $\operatorname{G_2}$ & $\operatorname{F_4}$ & $\operatorname{E_6}$ & $\operatorname{E_7}$ & $\operatorname{E_8}$ \\ \hline
		${W}$ & $\operatorname{D_{6}}$ & Solvable & $\operatorname{O_5(3)\rtimes2}$ & $\operatorname{O_7(2)\times2}$  & $(2.\operatorname{O_8^+(2)})\rtimes2$ \\ \hline
		$|{W}|$ & $12$ & $2^7\!\cdot\! 3^2$ & $2^7\!\cdot3^4\!\cdot5$ & $2^{10}\!\cdot3^4\!\cdot5\!\cdot7$  & $2^{14}\!\cdot3^5\!\cdot5^2\!\cdot7$ \\ \hline
		$l_c$ & $2$ & $96$ & $2^5\!\cdot3^3\!\cdot5$ & $2^9\!\cdot3^2\!\cdot5\!\cdot7$  & $2^{13}\!\cdot3^4\!\cdot5\!\cdot7$ \\  \hline
		$|w_c|$ & $6$ & $12$ & $12$ & $18$  & $30$ \\  \hline
		$|n_c|_{ad}$ & $6$ & $12$ & $12$ & $18$ & $30$ \\  \hline
		$|n_c|_{sc}$ & $6$ & $12$ & $12$ & $36$ & $30$ \\  \hline
		$\tau(w_c)$ & $1$ & $1$ & $3$ & $2$  & $1$ \\  \hline
		$l_e$ & $5$ & $385$ & $2^5\!\cdot5\!\cdot7\!\cdot11$ & $3^2\!\cdot5\!\cdot7\!\cdot11\!\cdot13\!\cdot17$  & $7\!\cdot11\!\cdot13\!\cdot17\!\cdot19\!\cdot23\!\cdot29$  \\  \hline
		$|w_e|$ & $2,3_2$ & $2,3,4_2,6_3,8$ & $3,6_2,9$ & $2,4,6_4,8$  & $2,3,4_2,5,6_6$ \\
			&  &  &  & $10,12,14,30$  & $8_2,9,10_2,12_6,14$ \\
			&  &  &  &   & $15,18_2,20,24,30$ \\  \hline
		$|n_e|_{ad}$ & $+$ & $2,3,8_2,6_3,8$ & $+$ & $+$ & $+$ \\  \hline
		$|n_e|_{sc}$ & $+$ & $2,3,8_2,6_3,8$ & $+$ & $4,4,12_4,8$ & $+$ \\
		    &  &  &  & $20,12,28,60$ &  \\  \hline
		$\tau(w_e)$ & $1,1_2$ & $16,9,\{4,8\}$ & $27,\{3,12\},3$ & $128,32,$  & $256,81,\{16,64\},25,$ \\
	    	&  & $\{1,4_2\},2$ &  & $\{2,8,18,32\}$  & $\{1,4,9,16,36,64\},\{4,16\},$ \\
	    	&  &  &  & $8,8,2,2,2$ & $9,\{1,16\},\{1_2,4_2,9,16\},$ \\
			&  &  &  &  & $4,1,\{1,4\},1,1,1$ \\  \hline
		${W}'$ & $\mathbb{Z}_3$ & $|{W}:{W}'|=4$ & $\operatorname{O_5(3)}$ & $\operatorname{O_7(2)}$  & $2.\operatorname{O_8^+(2)}$ \\ \hline
		$\operatorname{C}$ & $\mathbb{Z}_6$ & ${W}'$ & $W'$ & ${W}$  & ${W}'$ \\ \hline
		\end{tabular}
	\end{table}

\end{document}